\newcommand{\N}{\mathbb{N}}
\newcommand{\grad}{\text{grad}}
\newcommand{\Ric}{\textbf{Ric}}
\newenvironment{lemma}[2][Lemma]{\begin{trivlist}
\item[\hskip \labelsep {\bfseries #1}\hskip \labelsep {\bfseries #2.}]}{\end{trivlist}}
\newcommand{\vol}{\textnormal{vol}}
\newcommand{\ra}{\rightarrow}
\newcommand{\lb}{\langle}
\newcommand{\rb}{\rangle}
\newcommand{\RNum}[1]{\uppercase\expandafter{\romannumeral #1}}
\newtheorem{thm}{Theorem}[section]
\theoremstyle{definition}
\newtheorem{prop}{Proposition}[section]
\theoremstyle{remark}
\begin{document}
\title{Scalar Curvature Volume Comparison Theorems for Almost Rigid Sphere}
\author{Y.Zhang}
\begin{abstract}
    Bray's football theorem (\cite{bray2009penrose}) is a weakening of Bishop theorem in dimension 3. It gives a sharp volume upper bound for a three dimensional manifold with scalar curvature larger than $n(n-1)$ and Ricci curvature larger than $\varepsilon$.  This paper extends Bray's football theorem in high dimensions, assuming the manifold is axis symmetric or the Ricci curvature has an upper bound.
\end{abstract}
\maketitle
\section{Introduction}

Bishop theorem is a classical theorem in differential geometry that establishes the connection between volume and Ricci curvature. It was proven by Bishop in 1963 \cite{bishop}.

We assume throughout that $(M,g)$ is a compact smooth n-dimensional Riemannian manifold. Let $(S^n,\Bar{g})$ be the unit n-sphere with standard metric, i.e. it has constant sectional curvature 1. Let $\Ric_g$, $R_g$ and $\vol(M)$ be the Ricci curvature, scalar curvature and volume of $(M,g)$, respectively.
\begin{thm}[Bishop theorem]
 If $\Ric_g\ge (n-1)g$, then $\vol(M)\le \vol(S^n)$.
\end{thm}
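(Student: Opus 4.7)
The plan is to use geodesic polar coordinates centered at a fixed basepoint $p \in M$ and compare the Jacobian of the exponential map with its counterpart on the round sphere. Fix $p\in M$ and, for $\theta\in S^{n-1}\subset T_pM$, let $\gamma_\theta(r)=\exp_p(r\theta)$. On the star-shaped segment domain $D_p\subset T_pM$ where $\exp_p$ is a diffeomorphism onto its image, the volume form pulls back to $J(r,\theta)\,dr\,d\theta$ with $J(r,\theta)=\sqrt{\det g_{ij}(r,\theta)}$. The goal is to show the pointwise bound $J(r,\theta)\le \sin^{n-1}(r)$ and then integrate.

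First I would set up the Riccati comparison along each radial geodesic. Let $A(r)$ be the matrix of normal Jacobi fields along $\gamma_\theta$ with $A(0)=0$, $A'(0)=\mathrm{id}$, so that $J(r,\theta)=\det A(r)$. The shape operator $U:=A'A^{-1}$ of the geodesic sphere satisfies the matrix Riccati equation $U'+U^2+R_\gamma=0$, where $R_\gamma(\cdot)=R(\cdot,\gamma'_\theta)\gamma'_\theta$. Taking traces and applying Cauchy--Schwarz in the form $\tr(U^2)\ge (\tr U)^2/(n-1)$, the mean curvature $m(r):=\tr U(r)=(\log J)'(r)$ obeys
\[
m'(r)+\frac{m(r)^2}{n-1}\le -\Ric(\gamma'_\theta,\gamma'_\theta).
\]
On $S^n$ the analogous quantity is $m_0(r)=(n-1)\cot r$, satisfying the corresponding Riccati identity with equality and with $\Ric\equiv (n-1)$. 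Using $\Ric_g\ge (n-1)g$ together with the asymptotic $m(r)-m_0(r)\to 0$ as $r\to 0^+$, a standard ODE comparison yields $m(r)\le m_0(r)$ on $(0,\pi)$; integrating $(\log(J/\sin^{n-1}))'\le 0$ gives $J(r,\theta)\le \sin^{n-1}(r)$ throughout $D_p\cap\{r<\pi\}$.

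To close the argument, invoke Myers' theorem (itself a consequence of $\Ric_g\ge (n-1)g$) to bound $\mathrm{diam}(M)\le \pi$, so that $D_p\subset\{r\le\pi\}$ and $M=\exp_p(\overline{D_p})$. Then
\[
\vol(M)=\int_{D_p} J(r,\theta)\,dr\,d\theta \le \int_{S^{n-1}}\!\int_0^{\pi}\sin^{n-1}(r)\,dr\,d\theta=\vol(S^n).
\]

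The main technical obstacle is the cut locus: $\exp_p$ is only a diffeomorphism on $D_p$, and the bound on $J$ is derived along a single geodesic up to the first conjugate point, which may occur before the cut time. I would address this by (i) restricting integration to $D_p$, whose image has full measure since the cut locus is a closed set of measure zero (characterized as the set of points reached by either a conjugate or a non-unique minimizing geodesic), (ii) verifying via the Gauss lemma that the polar change of variables is legitimate on $D_p$, and (iii) noting that along any minimizing geodesic from $p$ there are no conjugate points in its interior, so the Riccati comparison and the resulting bound $J\le \sin^{n-1}(r)$ hold on all of $D_p$. These considerations are standard but constitute the only non-mechanical part of the proof.
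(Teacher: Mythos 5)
The paper does not actually prove Theorem~1.1; it states Bishop's theorem as a classical result and refers to Petersen \cite{petersen2016} for the geodesic-ball proof, which is precisely the Riccati/Jacobian comparison argument you give, and your version is correct. Your handling of the two subtleties — using Myers' theorem to confine the segment domain to $\{r\le\pi\}$, and restricting integration to the full-measure complement of the cut locus so that the Jacobian bound $J\le\sin^{n-1}r$ derived along conjugate-point-free radial geodesics suffices — is exactly what makes the classical argument go through.
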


A classical approach to prove this theorem is using geodesic balls  (see \cite{petersen2016}). Another approach is from optimal transport in Lott, Villani and Strum's seminal papers (see \cite{lott2009ricci},\cite{sturm1},\cite{sturm2}). They defined a synthetic Ricci curvature on metric measure spaces using optimal transport. Thus  Bishop theorem can be generalized to metric measure spaces. The third approach was discovered by H.Bray in his thesis using isoperimetric surfaces (\cite{bray2009penrose}). A byproduct of the third approach is Bray's football theorem which is a weakening of Bishop theorem in dimension 3.

\begin{thm}[Bray's football theorem] If $(M,g)$ satisfies: 
$\Ric_g\ge \varepsilon(n-1)g$, $R_g\ge n(n-1)$, $\varepsilon\in(0,1)$, then $V_g\le \alpha(\varepsilon)V_{\bar{g}}$, where $\alpha(\varepsilon)=1$, when $\varepsilon\in[\varepsilon_0,1)$; $\alpha(\varepsilon)>1$,  when $\varepsilon\in(0,\varepsilon_0)$.
\end{thm}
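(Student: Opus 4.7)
The plan is to follow Bray's isoperimetric surface method introduced in \cite{bray2009penrose}. The first step is to define the isoperimetric profile
\[A(t)=\inf\{|\partial\Omega|_g\,:\,\Omega\subset M,\ \vol(\Omega)=t\},\qquad t\in[0,V_g],\]
and, invoking standard regularity of isoperimetric hypersurfaces in dimension three, to select for each $t$ a smooth closed constant-mean-curvature surface $\Sigma_t$ attaining $A(t)$, with $A'(t)=H(t)$ at regular values. A preliminary topological step, combining the Ricci lower bound with the stability of $\Sigma_t$, ensures that every $\Sigma_t$ is a topological $2$-sphere, so that Gauss--Bonnet gives $\int_{\Sigma_t}K_{\Sigma_t}=4\pi$.

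The second step converts the stability of $\Sigma_t$ into a differential inequality for $A$. The second variation of area under volume-preserving deformations yields
\[A(t)A''(t)\le -\int_{\Sigma_t}\bigl(|II|^2+\Ric(N,N)\bigr).\]
Rewriting the integrand through the Gauss equation in a $3$-manifold,
\[|II|^2+\Ric(N,N)=\tfrac{R}{2}-K_\Sigma+\tfrac{H^2+|II|^2}{2},\]
and combining $R\ge 6$, $|II|^2\ge H^2/2$, and Gauss--Bonnet, gives one differential inequality for $A$ featuring a curvature-topology term $4\pi/A$; the pointwise Ricci bound $\Ric(N,N)\ge 2\varepsilon$ gives a second inequality involving $2\varepsilon A$. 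A convex combination of the two, with the weight chosen so that a model ``football'' realizes equality, produces the sharp ODE
\[A\,A'' + c(\varepsilon)(A')^2 + a(\varepsilon)A\le b(\varepsilon).\]

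The third step introduces the comparison football $F_\varepsilon$: a rotationally symmetric metric $dr^2+f_\varepsilon(r)^2 g_{S^2}$ with $R\equiv 6$ and least Ricci eigenvalue exactly $2\varepsilon$, whose isoperimetric profile $\bar A_\varepsilon$ saturates the ODE above, and with $F_1$ the round $S^3$. Standard ODE comparison between $A$ and $\bar A_\varepsilon$ on $[0,V_g]$, together with the boundary conditions $A(0)=A(V_g)=0$, forces $V_g\le V_{F_\varepsilon}$. Setting $\alpha(\varepsilon):=V_{F_\varepsilon}/V_{\bar g}$, monotonicity of $V_{F_\varepsilon}$ in $\varepsilon$ exhibits a critical $\varepsilon_0\in(0,1)$ such that $V_{F_\varepsilon}=V_{S^3}$ for $\varepsilon\ge\varepsilon_0$ and $V_{F_\varepsilon}>V_{S^3}$ for $\varepsilon<\varepsilon_0$, yielding the stated dichotomy.

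The main obstacle is identifying the correct convex combination that makes the resulting inequality sharp with $F_\varepsilon$ as the equality case; this is what produces the specific $\varepsilon_0$. Secondary difficulties are the topological classification of $\Sigma_t$ for \emph{every} $t$ (needed to apply Gauss--Bonnet globally), the lack of $C^2$ regularity of $A$ on a measure-zero set (handled via viscosity/distribution comparison), and the construction of the profile $f_\varepsilon$, which reduces to a first-order ODE with conical boundary behavior.
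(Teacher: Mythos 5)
The paper does not prove Bray's football theorem: Theorem 1.2 is quoted as a known result, with the explicit formula for $\alpha(\varepsilon)$ and the proof deferred to \cite{bray2009penrose}, \cite{jeffv}, and \cite{bray2019proof}. So there is no internal proof to compare against. Your outline does, however, faithfully track Bray's original isoperimetric-surface argument from \cite{bray2009penrose}: isoperimetric profile $A(t)$, stable CMC spheres, Gauss equation plus Gauss--Bonnet to extract $4\pi/A$, a second inequality from the pointwise Ricci lower bound $\Ric(N,N)\ge 2\varepsilon$, a convex combination tuned so a rotationally symmetric ``football'' saturates it, and ODE comparison with the model profile.

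Two cautions if you intend to flesh this out. Your displayed stability inequality $A\,A''\le -\int_{\Sigma_t}(|II|^2+\Ric(N,N))$ is not the correct form: since $A'(t)=H$ with $t$ the enclosed volume (not arclength), differentiating again and invoking the second variation produces an additional $(A')^2$-type term, and the distributional/viscosity interpretation at non-regular $t$ has to be handled. Also, the topological reduction to $\Sigma_t\cong S^2$ in Bray's setting comes from the stability inequality combined with Gauss--Bonnet (ruling out higher genus) rather than from the Ricci lower bound alone; getting it for \emph{every} $t$, including near the endpoints where the surfaces degenerate, is one of the genuinely delicate points. Finally, identifying the weight in the convex combination and thereby the threshold $\varepsilon_0$ is not a formal step but an explicit optimization over a one-parameter family of ODE comparisons; this is the core computation in Bray's thesis and the source of the numerical bounds $0.134<\varepsilon_0<0.135$ mentioned after Theorem 1.2.
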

For a full expression of $\alpha(\varepsilon)$, the readers can find it in (\cite{bray2009penrose},\cite{jeffv} and \cite{bray2019proof}). Regarding the constant $\varepsilon_0$ in Theorem 1.2, the numerical results show $0.134<\varepsilon_0<0.135$; M. Gurskya and J. Viaclovskyb proved $\varepsilon_0\le 0.5$.
When $\varepsilon\in(0,\varepsilon_0)$, $(M,g)$ with the largest volume in Theorem 1.2 is axis symmetric, i.e. $(M,g)$ has the shape of a football (American football). For the case of axis symmetry in higher dimensions, we have the following theorem:

\begin{thm}
  Let $n\ge 3$. If $(M,g)$ is axis symmetric, i.e. $M=[0,a]\times_f S^{n-1}$,  $g=dt^2+f(t)^2d\sigma^2$, where $d\sigma^2$ is the standard metric  of $S^{n-1}$. There exists an $\varepsilon (n)< 1$, such that for any axis symmetric manifold $(M,g)$ satisfies:
    \[\Ric(g)\ge \varepsilon(n)\cdot \Ric_0\cdot g\ \textnormal{and}\  R(g)\ge R_0, \]
    we have: $\vol(M)\le \vol (S^n)$.
    \end{thm}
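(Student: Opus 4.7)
The plan is to use axis symmetry to reduce the hypotheses to differential inequalities on the warping function $f$, and then to bound the volume $\vol(M)=\omega_{n-1}\int_0^a f^{n-1}\,dt$ via a phase-plane analysis. For $g=dt^2+f(t)^2 d\sigma^2$ a direct computation yields
\[
\Ric(\partial_t,\partial_t)=-(n-1)f''/f,\quad \Ric(X,X)=-f''/f+(n-2)(1-(f')^2)/f^2,
\]
\[
R=(n-1)\bigl(-2f''/f+(n-2)(1-(f')^2)/f^2\bigr),
\]
on a unit fibre vector $X$, with closure conditions $f(0)=f(a)=0$, $f'(0)=1=-f'(a)$ and $\omega_{n-1}:=\vol(S^{n-1})$. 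Setting $\Ric_0=n-1$ and $R_0=n(n-1)$, the hypotheses translate to three concrete upper bounds on $f''$.

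On the up-phase $\{f'>0\}$ I would parameterise by $s=f$ and introduce $p(s):=(f')^2$, so that $dp/ds=2f''$ and the up-phase volume is $\omega_{n-1}\int_0^{f^*} s^{n-1}/\sqrt{p(s)}\,ds$ with $f^*:=\max f$. Two a priori estimates drive the argument. First, applying the scalar inequality to $Q(s):=1-s^2-p(s)$ gives $Q'\geq-(n-2)Q/s$, so $s^{n-2}Q(s)$ is non-decreasing, vanishes as $s\to 0^+$, and hence $p\leq 1-s^2$; in particular $f^*\leq 1$. Second, integrating the radial Ricci inequality $dp/ds\leq -2\varepsilon s$ backwards from $p(f^*)=0$ yields the complementary lower bound $p(s)\geq\varepsilon((f^*)^2-s^2)$. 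Combining these via $s=f^*\sin\theta$ and summing over both phases gives the preliminary estimate $\vol(M)\leq (f^*)^{n-1}\varepsilon^{-1/2}\vol(S^n)\leq \varepsilon^{-1/2}\vol(S^n)$, which is sharp at $\varepsilon=1$ (recovering Bishop) but weaker by a factor $\varepsilon^{-1/2}$ when $\varepsilon<1$.

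To close the gap I would refine the lower bound on $p$ using the linear ODE $dp/ds+(n-2)p/s=(n-2)/s-ns$ obtained by saturating the scalar inequality, whose solutions are $p_{\mathrm{sat}}(s)=1-s^2+Cs^{2-n}$. Monotonicity of $s^{n-2}(p-p_{\mathrm{sat}})$ with $p_{\mathrm{sat}}(f^*)=0$ yields the sharper scalar-derived lower bound $p(s)\geq 1-s^2-(f^*)^{n-2}(1-(f^*)^2)s^{2-n}$, which dominates the radial bound near $s=f^*$ precisely when $(f^*)^2>(n-2)/(n-2\varepsilon)$. A Pontryagin-type optimal-control analysis then identifies the volume-maximising admissible $p$ as a concatenation of a ``football arc'' $p=1-s^2+Cs^{2-n}$ (on which the scalar inequality is saturated) with a rescaled sphere arc $p=1-\varepsilon s^2+C'$ (on which the radial Ricci inequality is saturated); the matching conditions at the junction produce an explicit one-parameter family of extremals parameterised by $\varepsilon$.

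The main obstacle is showing that this extremal family collapses to the single arc $p=1-s^2$ for $\varepsilon$ in a nontrivial interval $[\varepsilon(n),1]$: concretely, one must verify that for $\varepsilon\geq\varepsilon(n)$ the junction conditions force the football arc to degenerate, so that the unique maximiser is the sphere and $\vol(M)=\vol(S^n)$, while below $\varepsilon(n)$ the extremal is a genuine two-arc football with strictly larger volume. This parallels Bray's three-dimensional analysis, where the sharp $\varepsilon_0$ is only known numerically, and the exponent $n-2$ makes the $n$-dimensional matching system progressively intricate; however, the existence of a threshold $\varepsilon(n)<1$ follows from the fact that at $\varepsilon=1$ the two-arc system degenerates (the ``football arc'' has only the sphere as a smooth solution), together with the continuous dependence of the matching equations on $\varepsilon$. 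A minor technical point: since the football need not be symmetric about its midpoint, the up- and down-phase integrals are estimated independently and summed.
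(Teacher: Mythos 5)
Your reduction to ODE inequalities on the warping function, the two a priori lower bounds on $p=(f')^2$ (one from the scalar curvature giving $p\geq 1-s^2-m^{n-2}(1-m^2)s^{2-n}$, one from the radial Ricci inequality giving $p\geq\varepsilon(m^2-s^2)$), and the preliminary $\varepsilon^{-1/2}$-volume estimate are all exactly the ingredients the paper uses. Up to that point the two arguments are the same.

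The place where your proposal does not close is the final step. You frame it as an optimal-control problem, identify a two-arc bang-bang extremal, and then assert that existence of $\varepsilon(n)<1$ ``follows from the fact that at $\varepsilon=1$ the two-arc system degenerates, together with the continuous dependence of the matching equations on $\varepsilon$.'' Continuity is not sufficient: it tells you the optimal volume tends to $\vol(S^n)$ as $\varepsilon\to 1^-$, but it does not rule out the scenario in which the optimal volume exceeds $\vol(S^n)$ for every $\varepsilon<1$ and only approaches it in the limit. To conclude that equality holds on a nondegenerate interval $[\varepsilon(n),1]$ you must show that the sphere maximizes with a definite margin, i.e.\ some strict first-derivative or coercivity estimate that survives perturbing $\varepsilon$ below $1$. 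This is precisely what the paper supplies, and it is the genuine content of the theorem: the paper packages the upper bound as an explicit function $H(m)$ of the maximum fibre radius $m$, computes $H'(m)$, and shows that $H'(m)\geq 0$ on the relevant range $m\in[\varepsilon^{1/(2(n-1))},1]$ reduces (after pulling out $\varepsilon$-dependent factors) to the $\varepsilon$-free strict inequality
\[
(n-1)\int^1_0\frac{t^{n-1}\,dt}{(1-t^2)^{1/2}} \;>\; \int^1_0\frac{t}{(1-t^2)^{1/2}}\cdot\frac{1-t^{n-2}}{1-t^2}\,dt,
\]
which it then proves for $n\geq 4$ via Gamma-function identities and the monotonicity lemma $\sqrt{x-1/2}\,\Gamma(x)/\Gamma(x+1/2)\leq\sqrt{x+1/2}\,\Gamma(x+1)/\Gamma(x+3/2)$. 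Because this inequality is strict, there is room to absorb the factors $\varepsilon^{1/(n-1)}$ and $\varepsilon^{-3/2}$ that appear in $H'(m)$, and that slack is exactly what produces a threshold $\varepsilon(n)<1$. Your Pontryagin reformulation is a reasonable way to think about where the extremal lives, but without a quantitative estimate playing the role of this Gamma-function inequality, the claimed threshold does not follow. As a secondary point, your argument (like the paper's explicit Gamma computation) only addresses $n\geq 4$; the case $n=3$ needs to be referred back to Bray's original football theorem.
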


If we assume the manifold has a uniform upper bound for Ricci curvature, we have Theorem 1.4 which is a high dimensional analog of Bray's football theorem. 
\begin{thm}
For any $C>0$, there exists an $\varepsilon=\varepsilon(n,C)\in(0,1)$, such that
for any compact Riemannian manifold  $(M,g)$ satisfies
\begin{enumerate}
    \item $(1-
    \varepsilon)(n-1)g\le\Ric_g\le Cg$,
    \item $R_g\ge n(n-1)$,
\end{enumerate}
then $\vol(M)\le \vol(S^n)$.
\end{thm}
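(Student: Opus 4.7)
The plan is a contradiction argument combining Cheeger--Colding--Anderson compactness (made possible by the two--sided Ricci bound) with a local scalar--curvature rigidity statement at the round sphere.

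First, suppose the conclusion fails: for each $i\in\N$ there are $\varepsilon_i\to 0^+$ and compact manifolds $(M_i,g_i)$ satisfying $(1-\varepsilon_i)(n-1)g_i\le \Ric_{g_i}\le Cg_i$ and $R_{g_i}\ge n(n-1)$, but with $\vol(M_i,g_i)>\vol(S^n,\bar g)$. Myers' theorem yields a uniform diameter bound $\mathrm{diam}(M_i)\le \pi/\sqrt{1-\varepsilon_i}$, and Bishop's theorem gives $\vol(M_i)\le (1-\varepsilon_i)^{-n/2}\vol(S^n)$, so $\vol(M_i)\to \vol(S^n)$ from above. Combined with the uniform positive lower bound on volume, the sequence falls into Anderson's $C^{1,\alpha}$--convergence regime (harmonic radius controlled by two--sided Ricci plus volume noncollapse, via Cheeger's inequality for the injectivity radius).

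Second, passing to a subsequence and choosing suitable diffeomorphisms, $(M_i,g_i)\to (M_\infty,g_\infty)$ in $C^{1,\alpha}$ (and weakly in $W^{2,p}$ for all $p<\infty$), with a limit metric satisfying $\Ric_{g_\infty}\ge (n-1)g_\infty$ a.e.\ and $\vol(M_\infty)=\vol(S^n)$. The rigidity case of Bishop's theorem then identifies $(M_\infty,g_\infty)$ with $(S^n,\bar g)$. In particular, for $i$ large, $M_i$ is diffeomorphic to $S^n$ and, after pulling back by a diffeomorphism, $g_i$ is arbitrarily $W^{2,p}$--close to $\bar g$, while still obeying $R_{g_i}\ge n(n-1)$ and $\vol(g_i)>\vol(\bar g)$.

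Third, the contradiction comes from a local scalar--curvature rigidity statement on the round sphere: there is $\delta=\delta(n)>0$ such that every metric $g$ on $S^n$ with $\|g-\bar g\|_{W^{2,p}}<\delta$ and $R_g\ge n(n-1)$ satisfies $\vol(g)\le \vol(\bar g)$. The linearization already indicates why this should hold. For a variation $g_t=\bar g+th$, integration by parts on $S^n$ yields $\int_{S^n} R'(0)\,dvol_{\bar g}=-(n-1)\int_{S^n}\tr_{\bar g}h\,dvol_{\bar g}$, which is proportional to $-\vol'(0)$; thus any first--order volume increase forces $R'(0)<0$ somewhere. When the first--order variation of volume vanishes, a second--variation analysis using the Lichnerowicz eigenvalue bound $\lambda_1(-\Delta_{\bar g})=n$ (after fixing the diffeomorphism and conformal gauges) shows $\bar g$ is a strict local maximizer of volume subject to $R\ge n(n-1)$, and a standard implicit function theorem argument upgrades the infinitesimal statement to a genuine $W^{2,p}$ neighborhood.

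The main obstacle will be executing the third step cleanly. Cheeger--Colding--Anderson naturally delivers $C^{1,\alpha}$ convergence, one derivative short of what the classical linearization would like, so one must either upgrade the convergence (by passing to harmonic coordinates and exploiting the elliptic scalar--curvature equation for the conformal factor to obtain $W^{2,p}$ control) or prove the rigidity in a norm compatible with the available convergence. Orthogonalizing the perturbation against the diffeomorphism and conformal directions, and handling the inequality $R_g\ge n(n-1)$ rather than an equation, is where the delicate analysis lies; once a clean local rigidity statement is in place, contradiction with $\vol(g_i)>\vol(\bar g)$ for large $i$ is immediate.
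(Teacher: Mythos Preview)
Your proposal is correct and follows essentially the same route as the paper: Cheeger--Colding together with Anderson's $C^{1,\alpha}$ convergence reduce the question to a local scalar-curvature rigidity statement at $(S^n,\bar g)$, and the paper's Theorem~3.1 supplies exactly that. The obstacle you anticipate is resolved via the second option you list---the paper proves the local rigidity in the $W^{1,p}$ norm for $p>n/2$ (hence compatible with $C^{1,\alpha}$), using the Brendle--Marques divergence-free gauge and an explicit second-order expansion combining the Poincar\'e inequality $\lambda_1=n$ with the pointwise bounds $\|h\|^2\ge \tfrac1n\,\tr(h)^2$ and $\|\nabla h\|^2\ge \tfrac1n\,\|\nabla\tr(h)\|^2$.
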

If we choose $\varepsilon$ sufficiently small, then the results in (\cite{cheeger1997structure}) show that $M$ is diffeomorphic to $S^n$.  Then   
according to Andersen's paper (\cite{anderson1990convergence}), $g$ is close to $\bar{g}$ in $C^{1,\alpha}$ norm.  Hence, Theorem 1.3 is a directly result of our main Theorem 3.1.
We use the tools in \cite{brendle2011} to prove the main Theorem 3.1. The perturbation formula of the scalar curvature is crucial for deriving the contradiction. Theorem 1.4 is slightly stronger than Corollary A in \cite{yuan2016volume}, while  Corollary A needs the metric $g$ on $S^n$ is close to the standard metric in $W^{2,p}$ norm. 
\section*{Acknowledgement}
The author is grateful to Hubert Bray for the helpful advice on this paper. Thanks also to Simon Brendle for valuable explanations and communications related to this work. 

\section{Proof of theorem 1.3} 

Since $g=dt^2+f(t)^2d\sigma^2$, according to O'Neil\cite{o1983semi}, for vertical tangent vectors $v,w\in i_*(TS^{n-1})$, \[Ric_g(v,w)=Ric(v,w)^{S^{n-1}}-\langle v,w\rangle f^\#,\]
\[\textnormal{where}\  f^{\#}=\frac{\Delta f}{f}+(n-2)\frac{\langle \grad f,\grad f\rangle}{f^2}.\]
\[Ric_g(\partial_t,\partial_t)=-\frac{n-1}{f}H^f(\partial_t,\partial_t).\]

Since $0,a$ are two end points, we have $f(0)=f(a)=0$.

Without loss of generality, we assume 
$f(t)\ge0$, $f'(0)\ge  0$, then the curvature conditions in Theorem 1.3 imply:  
    \begin{equation}
       -\frac{f''}{f}\ge \varepsilon ,
    \end{equation}
    \begin{equation}
       \frac{n-2}{f^2}-\frac{f''}{f}-(n-2)\frac{(f')^2}{f^2}\ge (n-1)\varepsilon,
    \end{equation}
    \begin{equation}
    -\frac{2f''}{f}+\frac{n-2}{f^2}-(n-2)\frac{(f')^2}{f^2}\ge n,
\end{equation}
where $\varepsilon\in(0,1]$. 

Assume $C_\varepsilon(f)=\vol(M)/\vol(S^n)$, then:
\begin{equation*}
    \displaystyle{C_\varepsilon(f)=\frac{\omega_{n-1}\int_0^s f(t)^{n-1} dt}{\omega_n}},
\end{equation*}
where $\omega_n$ is the volume of $S^n$. 

Note when  $f=\sin t$, $(M,g)$ is a n-sphere with standard metric and $f$ satisfies equations (1)-(3),   $C_{\varepsilon}(\sin t)=1$. 

Hence, we need to prove: if $\varepsilon$ is  sightly less that 1, we still have $C_\varepsilon(f)\le1$.

Before proving Theorem 1.3, we need to prove a lemma first:

\begin{lemma}{1}
  For $x\in [0.5,\infty)$, \[\sqrt{x-0.5}\cdot \frac{\Gamma(x)}{\Gamma(x+0.5)}< \sqrt{x+0.5}\cdot\frac{\Gamma(x+1)}{\Gamma(x+1.5)}.\]
\end{lemma}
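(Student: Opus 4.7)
The plan is to reduce the stated inequality to a purely elementary one by applying the functional equation $\Gamma(z+1)=z\,\Gamma(z)$ to the right-hand side. Writing $\Gamma(x+1)=x\,\Gamma(x)$ and $\Gamma(x+1.5)=(x+0.5)\,\Gamma(x+0.5)$, the right-hand side becomes
\[
\sqrt{x+0.5}\cdot\frac{x\,\Gamma(x)}{(x+0.5)\,\Gamma(x+0.5)}=\frac{x}{\sqrt{x+0.5}}\cdot\frac{\Gamma(x)}{\Gamma(x+0.5)}.
\]
So the common factor $\Gamma(x)/\Gamma(x+0.5)$, which is strictly positive on $[0.5,\infty)$, appears on both sides and can be cancelled.

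After this cancellation, the inequality is equivalent to $\sqrt{x-0.5}<x/\sqrt{x+0.5}$ for $x\in[0.5,\infty)$. At $x=0.5$ the left-hand side is $0$ and the right-hand side is positive, so the inequality holds. For $x>0.5$ both sides are positive, so squaring gives the equivalent statement $(x-0.5)(x+0.5)<x^2$, i.e.\ $x^2-\tfrac14<x^2$, which is obvious.

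There is essentially no obstacle in this proof: the only thing to verify is that one may cancel $\Gamma(x)/\Gamma(x+0.5)$, which is legitimate because $\Gamma$ is positive on $(0,\infty)$ and $x\geq 0.5$ ensures both arguments exceed zero. Thus the lemma follows in a few lines; the content is really just the identity $\Gamma(z+1)=z\,\Gamma(z)$ plus the trivial inequality $x^2-\tfrac14<x^2$.
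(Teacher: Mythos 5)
Your proof is correct and essentially the same as the paper's: both apply the functional equation $\Gamma(z+1)=z\Gamma(z)$ to convert one side's gamma ratio into the other's, then reduce to the elementary inequality $(x-0.5)(x+0.5)<x^2$. The only cosmetic difference is direction — you rewrite the right-hand side to match the left, the paper rewrites the left to match the right — and you are slightly more careful in establishing the strict inequality as stated.
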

\begin{proof}
Since $\Gamma(x+1)=x\Gamma(x)$, therefore, 
\begin{align*}
    \sqrt{x-0.5}\cdot \frac{\Gamma(x)}{\Gamma(x+0.5)}&=\sqrt{x-0.5}\cdot\frac{\Gamma(x+1)}{\Gamma(x+1.5)}\cdot\frac{x+0.5}{x}
    \\&\le \sqrt{x+0.5}\cdot\frac{\Gamma(x+1)}{\Gamma(x+1.5)}.
\end{align*}
\end{proof}

In fact, $\sqrt{x-0.5}\cdot \frac{\Gamma(x)}{\Gamma(x+0.5)}$ is increasing on $[0.5,\infty)$, however, lemma 1 is enough for us to prove Theorem 1.3.

 Equation (1) implies $f(t)$ is concave, 
we can assume $f'(t)\ge 0$, for $t\in[0,r]$, where  $r$ satisfies $f'(r)=0$. We only focus on $[0,r]$, since the case of $[r,a]$ is similar to $[0,r]$ by symmetry. 
Equation (3) implies: $f^{n-2}(1-(f')^2-f^2)$ is increasing. Hence, $f^2+f'^2\le 1$, then we have: equation (1) and (3) imply equation (2).

Assume $m=f(r)$, i.e. $m=\max_{t\in[0,a]} f(t)$,\ then for $0<t\le r$,  $f^{n-2}(1-(f')^2-f^2)\le m^{n-2}(1-m^2)$, so $f'\ge (1-f^2-\frac{m^{n-2}(1-m^2)}{f^{n-2}})^{1/2}$.

Equation (1) implies: $\varepsilon f^2+(f')^2$ is decreasing, so $f'\ge (\varepsilon (m^2-f^2))^{1/2}$.

Therefore, we have two lower bounds for $f'$, let $M_\varepsilon(f)$ be:   \[M_\varepsilon(f)=\max\{1-f^2-\frac{m^{n-2}(1-m^2)}{f^{n-2}}, \varepsilon (m^2-f^2)\}.\]

Then we have:
\begin{equation}
    \int_0^r f(t)^{n-1}dt=\int^m_0 \frac{f^{n-1}}{f'}df\le \int^m_0 \frac{f^{n-1}}{M_\varepsilon(f)^{1/2}}df.
\end{equation}

We substitute $f$ by $ms$, then: $s\in[0,1)$, 
\begin{align*}
    1-f^2-\frac{m^{n-2}(1-m^2)}{f^{n-2}}&=1-m^2s^2-\frac{m^{n-2}(1-m^2)}{m^{n-2}s^{n-2}}
    \\&=m^2(1-s^2)\left[1-\frac{(1-m^2)(1-s^{n-2})}{m^2s^{n-2}(1-s^2)}\right],
\end{align*}
\begin{equation*}
    \int^m_0\frac{f^{n-1}}{M_\varepsilon(f)^{1/2}}df=\int^1_0\frac{m^{n-1}s^{n-1}ds}{\sqrt{\max\{(1-s^2)[1-\frac{(1-m^2)(1-s^{n-2})}{m^2s^{n-2}(1-s^2)}],\varepsilon (1-s^2)\}}}.
\end{equation*}

If $m^{n-1}\le \varepsilon^{1/2}$, then 
\[ \int^m_0\frac{f^{n-1}}{M_\varepsilon(f)^{1/2}}df\le \int^1_0\frac{m^{n-1}t^{n-1}}{[\varepsilon(1-t^2)]^{1/2}}\le \int^1_0 \frac{t^{n-1}}{(1-t^2)^{1/2}},\]
i.e. $\vol(M)\le \vol(S^n)$. Therefore, we can assume $m^{n-1}\ge \varepsilon^{1/2}$. 

Since when $s\ra 1_-$, we have $1-\frac{(1-m^2)(1-s^{n-2})}{m^2s^{n-2}(1-s^2)}\ra 1-\frac{(n-2)(1-m^2)}{2m^2}$,  then
\[1-\frac{(n-2)(1-m^2)}{2m^2}-\varepsilon\ge 1-\frac{(n-2)(1-m^2)}{2m^2}-m^{2(n-1)}=(1-m^2)(\sum^{n-2}_{i=0} m^{2i}-\frac{n-2}{2m^2}). \]

Therefore, there exists $\varepsilon\in(0,1)$ such that $1-\frac{(n-2)(1-m^2)}{2m^2}> \varepsilon$, for any $\varepsilon^{\frac{1}{2(n-2)}}\le m\le 1$. Then, we can define: 
\[h(m)=\max\{x|x\in (0,1), x\  \textnormal{satisfies}\ 1-\frac{(1-m^2)(1-x^{n-2})}{m^2x^{n-2}(1-x^2)}=\varepsilon \}.\]
Hence, for any $s\in(h(m),1)$, 
\[1-\frac{(1-m^2)(1-s^{n-2})}{m^2s^{n-2}(1-s^2)}>\varepsilon.\]

Assume $\displaystyle{H(m)=\int^{h(m)}_0 \frac{m^{n-1}t^{n-1}}{[\varepsilon(1-t^2)]^{1/2}}dt+\int_{h(m)}^1 \frac{m^{n-1}t^{n-1}}{[(1-t^2)(1-\frac{(1-m^2)(1-t^{n-2})}{m^2t^{n-2}(1-t^2)})]^{1/2}}dt}$. 

Then we have: \[H(m)\ge\int^m_0\frac{f^{n-1}}{M_\varepsilon(f)^{1/2}}df \ge\int_0^r f(t)^{n-1}dt.\]

Since when $m\ra 1$, $h(m)\ra 0$, then the expression of $H(1)$ is exactly the volume of hemisphere. As a result of this observation, we need to show $H(m)\le H(1)$, so we estimate $H'(m)$ as shown below.


\begin{align*}
     m^{4-n}H'(m)=&(n-1)\Bigg[\int^1_{h(m)}\frac{m^2t^{n-1}dt}{\sqrt{(1-t^2)(1-\frac{(1-m^2)(1-t^{n-2})}{m^2t^{n-2}(1-t^2)})}}+\int^{h(m)}_0 \frac{m^2t^{n-1}dt}{[\varepsilon(1-t^2)]^{1/2}}\Bigg]
 \\&-\int^1_{h(m)}\frac{t(1-t^{n-2})dt}{(1-t^2)^{3/2}(1-\frac{(1-m^2)(1-t^{n-2})}{m^2t^{n-2}(1-t^2)})^{3/2}}
 \\ \ge &(n-1) m^2 \int^1_0\frac{t^{n-1}dt}{(1-t^2)^{1/2}}-\varepsilon^{-3/2}\int^1_0\frac{t}{(1-t^2)^{1/2}}\cdot \frac{1-t^{n-2}}{1-t^2}dt
 \\ \ge &(n-1)\varepsilon^{1/(n-1)}\int^1_0\frac{t^{n-1}dt}{(1-t^2)^{1/2}}-\varepsilon^{-3/2}\int^1_0\frac{t}{(1-t^2)^{1/2}}\cdot \frac{1-t^{n-2}}{1-t^2}dt. 
\end{align*}


The rest part of this section is to prove when $n\ge 4$,  
\begin{equation} \label{gamma1}
    (n-1)\int^1_0\frac{t^{n-1}dt}{(1-t^2)^{1/2}}>\int^1_0\frac{t}{(1-t^2)^{1/2}}\cdot \frac{1-t^{n-2}}{1-t^2}dt.
\end{equation}

If inequality (\ref{gamma1}) holds, then we can find an $\varepsilon<1$
 such that $H'(m)> 0$, for $m\in [\varepsilon^{\frac{1}{2(n-1)}},1]$.
 

We divide $n\ge 4$ into two situations: $n=2k+2$ and $n=2k+1$, $k\in \N$, as they are slightly different.
\subsection{$n=2k+2$, $k\ge 1$:}
\[(n-1)\int^1_0\frac{t^{n-1}}{(1-t^2)^{1/2}}dt=(2k+1)\int^1_0\frac{t^{2k+1}}{(1-t^2)^{1/2}}dt=(2k+1)\frac{\sqrt{\pi}}{2}\frac{\Gamma(k+1)}{\Gamma(k+\frac{3}{2})},\]
\[\int^1_0\frac{t}{(1-t^2)^{1/2}}\cdot \frac{1-t^{n-2}}{1-t^2}dt=\frac{\sqrt{\pi}}{2}\sum^{k-1}_{j=0}\frac{\Gamma(j+1)}{\Gamma(j+\frac{3}{2})}.\]
According to lemma 1, to prove equation (\ref{gamma1}), we need to show 
\[\frac{(2k+1)}{\sqrt{k+0.5}}\ge \sum^{k-1}_{j=0}\frac{1}{\sqrt{j+1/2}}.\]

As $\frac{1}{\sqrt{j+1/2}}\le \frac{2}{\sqrt{j+1}+\sqrt{j}}=2\sqrt{j+1}-2\sqrt{j}$, so $\displaystyle{\sum^{k-1}_{j=0}\frac{1}{\sqrt{j+1/2}}\le 2\sqrt{k}\le \frac{(2k+1)}{\sqrt{k+0.5}}}.$

\subsection{$n=2k+1$, $k\ge 2$:}
\[(n-1)\int^1_0\frac{t^{n-1}}{(1-t^2)^{1/2}}dt=2k\int^1_0\frac{t^{2k}}{(1-t^2)^{1/2}}dt=2k\frac{\sqrt{\pi}}{2}\frac{\Gamma(k+1/2)}{\Gamma(k+1)},\]
\begin{align*}
    \int^1_0\frac{t}{(1-t^2)^{1/2}}\cdot \frac{1-t^{n-2}}{1-t^2}dt=&\int^1_0\frac{t^2}{\sqrt{1-t^2}}(1+t^2+\cdots+t^{2k-4})+\frac{t}{(t+1)(1-t^2)^{1/2}}dt
\\ =&\frac{\sqrt{\pi}}{2}\sum^{k-1}_{j=1}\frac{\Gamma(j+1/2)}{\Gamma(j+1)}+\frac{\pi-2}{2}.
\end{align*}
We need to prove: $\displaystyle{k\sqrt{\pi}\frac{\Gamma(k+1/2)}{\Gamma(k+1)}\ge \frac{\sqrt{\pi}}{2}\sum^{k-1}_{j=1}\frac{\Gamma(j+1/2)}{\Gamma(j+1)}+\frac{\pi-2}{2}}$,
\\ dividing by $\frac{\sqrt{\pi}}{2}\cdot\sqrt{k}\cdot\frac{\Gamma(k+1/2)}{\Gamma(k+1)}$ at both sides, applying lemma 1 , all we need to show is:
\[2\sqrt{k}\ge \sum^{k-1}_{j=1}\frac{1}{\sqrt{j}}+\frac{(\pi-2)/\sqrt{\pi}}{\sqrt{2}\cdot\frac{\Gamma(2.5)}{\Gamma(3)}}.\]
As $\displaystyle{\frac{(\pi-2)/\sqrt{\pi}}{\sqrt{2}\cdot\frac{\Gamma(2.5)}{\Gamma(3)}}\approx 0.685\le \sqrt{2}}$,
\\since $\displaystyle{\frac{1}{\sqrt{j}}\le \frac{2}{\sqrt{j+1/2}+\sqrt{j-1/2}}=2\sqrt{j+1/2}-2\sqrt{j-1/2}}$, 
\\then $\displaystyle{\sqrt{2}+\sum^{k-1}_{k=1}\frac{1}{\sqrt{j}}\le 2\sqrt{k}}.
$

\section{Proof of theorem 1.4}
The following proposition is  Proposition 11 in \cite{brendle2011}, while the original proposition is in space $W^{2,p}$.  However, the proof in \cite{brendle2011} can be applied to our circumstance with few modifications, since we can still split a  $W^{1,p}$ symmetric two-tensor into a divergence free two-tensor and a Lie derivative of the metric.
\begin{prop}
Assume $p>n$. $\Omega$ is an n-dimensional compact manifold with boundary. Let $g$, $\bar{g}$ be Riemannian metrics on $\Omega$.  
 If $\|g-\bar{g}\|_{W^{1,p}(\Omega,g)}$ is sufficiently small, there exists a diffeomorphism $\varphi: \Omega\rightarrow \Omega$, such that $\varphi|_{\partial\Omega}=id$ and $h=\varphi^*(g)-\bar{g}$ is divergence free.  
 Moreover, there exists a positive constant $C$ that depends on $\Omega$, such that:
\[\|h\|_{W^{1,p}(\Omega,\bar{g})}\le C\|g-\bar{g}\|_{W^{1,p}(\Omega,\bar{g})}.\]
\end{prop}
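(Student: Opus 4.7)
The plan is to follow Brendle's gauge-fixing scheme, with the $W^{2,p}$-level Hodge-type decomposition replaced by its $W^{1,p}$-level counterpart, leveraging the hypothesis $p>n$ throughout. At the linear level one decomposes a symmetric $(0,2)$-tensor into a divergence free part and a Lie derivative of $\bar{g}$ (the ``pure gauge'' part); at the nonlinear level the implicit function theorem promotes this to the pullback by an honest diffeomorphism fixing $\partial\Omega$.

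For the linear step, introduce the Lam\'e-type operator
\[
L(X)=\text{div}_{\bar{g}}(\mathcal{L}_X\bar{g})=\Delta X+\text{grad}\,\text{div}\,X+\text{Ric}(X),
\]
acting on vector fields vanishing on $\partial\Omega$. Given $h\in W^{1,p}(\Omega;S^2T^*\Omega)$ we have $\text{div}_{\bar{g}}h\in L^p$, and the Dirichlet problem $L(X)=\text{div}_{\bar{g}}h$, $X|_{\partial\Omega}=0$, is strongly elliptic. Standard $L^p$-elliptic regularity produces a unique $X\in W^{2,p}\cap W^{1,p}_0$ with $\|X\|_{W^{2,p}}\le C\|h\|_{W^{1,p}}$, after which $h_0:=h-\mathcal{L}_X\bar{g}$ is divergence free with the same regularity.

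For the nonlinear step, define
\[
\Phi(X,g)=\text{div}_{\bar{g}}\bigl((\exp_{\bar{g}}X)^*g-\bar{g}\bigr)\colon W^{2,p}_0\times W^{1,p}\longrightarrow L^p,
\]
where $\exp_{\bar{g}}X$ is the diffeomorphism obtained from $X$ via the metric exponential (so it restricts to the identity on $\partial\Omega$). Then $\Phi(0,\bar{g})=0$ and $D_X\Phi|_{(0,\bar{g})}=L$ is an isomorphism by the linear step, so the Banach-space implicit function theorem supplies a unique small $X(g)$ with $\Phi(X(g),g)=0$ and $\|X(g)\|_{W^{2,p}}\le C\|g-\bar{g}\|_{W^{1,p}}$. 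Taking $\varphi=\exp_{\bar{g}}X(g)$ yields the required diffeomorphism, and pulling this estimate through gives the bound on $h=\varphi^*g-\bar{g}$.

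The main obstacle is verifying that $\Phi$ is well-defined and $C^1$ in the prescribed spaces, since the pullback $(\varphi^*g)_{ij}=g_{kl}\!\circ\!\varphi\cdot\partial_i\varphi^k\,\partial_j\varphi^l$ must stay in $W^{1,p}$ with quantitative control on its dependence on $(X,g)$. Here the condition $p>n$ is crucial: it supplies the embedding $W^{2,p}\hookrightarrow C^1$, so $\varphi$ is $C^1$ and $D\varphi\in W^{1,p}$, and it makes $W^{1,p}$ into a Banach algebra, so pointwise products and composition with $\varphi$ preserve $W^{1,p}$-regularity. Granting these mapping properties, Fr\'echet differentiability follows by a routine chain-rule calculation and the remainder of the argument reproduces Brendle's with $W^{2,p}$ replaced by $W^{1,p}$ throughout.
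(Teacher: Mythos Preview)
Your proposal is correct and follows exactly the route the paper indicates: the paper gives no independent proof but simply cites Proposition~11 of Brendle--Marques and notes that the $W^{2,p}$ argument goes through in $W^{1,p}$ because a $W^{1,p}$ symmetric two-tensor can still be split into a divergence-free part and a Lie derivative of $\bar g$. Your sketch is precisely this adaptation (linear Lam\'e step plus implicit function theorem, with the $p>n$ embedding and algebra property supplying the needed mapping properties), spelled out in more detail than the paper itself provides.
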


\begin{thm}
Assume $(S^n,\Bar{g})$ is the n-sphere with standard metric. Let $g$ be another metric on $S^n$ with the following properties:
\begin{enumerate}
    \item $R_g\ge R_{\Bar{g}}=n(n-1)$,
    \item $V_{g}\ge V_{\Bar{g}}$, 
\end{enumerate}
where $V_g$, $V_{\bar{g}}$ is the volume of $(S^n,g)$ and $(S^n,g)$

If $h=g-\Bar{g}$ is sufficiently small in $W^{1,p}(S^n,\bar{g})$ norm, $p>\frac{n}{2}$, then $V_g=V_{\Bar{g}}$, moreover, there exists a diffeomorphism $\varphi: S^n\ra S^n$, such that $\varphi^*(\Bar{g})=g$.
\end{thm}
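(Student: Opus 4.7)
The plan is to follow the framework of \cite{brendle2011}: reduce to a divergence-free metric perturbation via Proposition 3.1, expand $R_{g}$ and $dV_{g}$ to second order in $h=g-\bar g$, and show that the combination of hypotheses (1) and (2) annihilates the first-order contribution exactly, leaving a quadratic form whose sign is controlled by the scalar-curvature perturbation formula.

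The first step is to apply Proposition 3.1 with $\Omega=S^{n}$. Since $S^{n}$ is closed, the condition $\varphi|_{\partial\Omega}=\mathrm{id}$ is vacuous, and the $W^{1,p}$ splitting discussed in the excerpt yields a diffeomorphism $\varphi:S^{n}\to S^{n}$ with $h:=\varphi^{*}g-\bar g$ divergence-free with respect to $\bar g$ and $\|h\|_{W^{1,p}}\le C\|g-\bar g\|_{W^{1,p}}$. Because $R_{\varphi^{*}g}=R_{g}\circ\varphi$ and $V_{\varphi^{*}g}=V_{g}$, hypotheses (1) and (2) pass to $\bar g+h$, and it suffices to prove $h\equiv 0$.

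Setting $\omega:=\tr_{\bar g}h$ and using $\Ric_{\bar g}=(n-1)\bar g$, the Taylor expansions read $R_{g}-n(n-1) = -\Delta\omega-(n-1)\omega+Q(h,\nabla h)+E_{3}$ and $dV_{g}=(1+\tfrac12\omega+\tfrac18\omega^{2}-\tfrac14|h|^{2}+E_{3}')\,dV_{\bar g}$, where $Q$ is the quadratic part of Brendle's scalar-curvature perturbation formula and $E_{3},E_{3}'$ are cubic in $h$. Integrating, writing $\mathcal A(g):=\int_{S^{n}}(R_{g}-n(n-1))\,dV_{g}$, and using $\int(-\Delta\omega)\,dV_{\bar g}=0$,
\[\mathcal A(g) = -(n-1)\!\int\!\omega\,dV_{\bar g}+A_{2}(h)+O(\|h\|^{3}),\qquad V_{g}-V_{\bar g} = \tfrac12\!\int\!\omega\,dV_{\bar g}+V_{2}(h)+O(\|h\|^{3}).\]
Hypotheses (1) and (2) give $\mathcal A(g)\ge 0$ and $V_{g}\ge V_{\bar g}$, and the first-order terms are proportional, so forming the combination $\mathcal A(g)+2(n-1)(V_{g}-V_{\bar g})\ge 0$ eliminates $\int\omega$ and yields
\[\mathcal Q(h)+O(\|h\|^{3})\ge 0,\qquad \mathcal Q(h):=A_{2}(h)+2(n-1)V_{2}(h).\]

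The main obstacle is to establish the matching reverse bound $\mathcal Q(h)\le -c\,\|h\|_{W^{1,2}(S^n,\bar g)}^{2}$ for divergence-free $h$ orthogonal in $L^{2}$ to the finite-dimensional space $\mathcal K$ of infinitesimal isometries of $\bar g$. This requires plugging in the explicit formula for $Q$ from \cite{brendle2011} and performing a Bochner-type integration by parts on the round sphere, using $R_{ijkl}=\bar g_{ik}\bar g_{jl}-\bar g_{il}\bar g_{jk}$, so that the $|\nabla h|^{2}$, $|\nabla\omega|^{2}$, $\omega^{2}$ and $|h|^{2}$ pieces assemble into a negative-definite quadratic form modulo $\mathcal K$. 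Composing $\varphi$ with a suitable isometry of $\bar g$ arranges $h\perp_{L^{2}}\mathcal K$. The Sobolev embedding $W^{1,p}\hookrightarrow L^{q}$ with $q>n$ (valid for $p>n/2$) then controls the cubic error by $C\|h\|_{W^{1,p}}\|h\|_{W^{1,2}}^{2}$, which for $\|h\|_{W^{1,p}}$ sufficiently small forces $\|h\|_{W^{1,2}}=0$. Hence $h\equiv 0$, $\varphi^{*}g=\bar g$, and in particular $V_{g}=V_{\bar g}$.
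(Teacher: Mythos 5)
Your plan follows the same route as the paper: apply Proposition 3.1 to reduce to a $\bar g$-divergence-free perturbation $h$, integrate Brendle--Marques's pointwise scalar-curvature expansion, add a multiple $k$ of the volume deficit to cancel the linear $\int\mathrm{tr}(h)$ term, and then show the remaining quadratic form is negative definite up to cubic error controlled by Sobolev embedding. Two remarks on where you deviate or stop short.

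First, you fix $k=2(n-1)$ so that the \emph{linear} term vanishes, whereas the paper tunes $k=\frac{8(n-1)-4\delta}{4+\delta}=2(n-1)-\varepsilon$ with $\delta=\frac{1}{V_{\bar g}}\int\mathrm{tr}(h)\,dV_{\bar g}$, chosen so the linear and quadratic $\mathrm{tr}(h)$ contributions complete to a perfect square $\bigl(\frac{k}{8}+\frac12\bigr)\int(\mathrm{tr}(h)-\delta)^2$. This is more than cosmetic: the subsequent Poincar\'e inequality $\|\nabla\mathrm{tr}(h)\|_{L^2}^2\ge n\|\mathrm{tr}(h)-\delta\|_{L^2}^2$ applies to the \emph{mean-zero} part $\mathrm{tr}(h)-\delta$, and the square-completion makes that term appear directly. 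With a fixed $k=2(n-1)$ you will still have $\int\mathrm{tr}(h)^2$ floating around and will need to subtract the mean by hand before Poincar\'e can be used; the paper's choice of $k$ does this bookkeeping automatically (at the cost of $k$ now depending on $h$ through $\delta$, which is harmless since $|k-2(n-1)|\lesssim|\delta|\to 0$).

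Second, and more importantly, the step you label ``the main obstacle'' --- plugging in the explicit quadratic form, doing the Bochner-type integration by parts using $\bar R_{ijkl}=\bar g_{il}\bar g_{jk}-\bar g_{ik}\bar g_{jl}$, and regrouping $|\nabla h|^2$, $|\nabla\mathrm{tr}(h)|^2$, $\mathrm{tr}(h)^2$, $|h|^2$ --- is exactly the substance of the paper's proof, and you leave it entirely to the reader. Without it you have an outline, not a proof; the sign of the quadratic form is precisely what is at issue. Relatedly, your statement that the quadratic form is negative definite ``modulo $\mathcal K$'' and the proposal to compose $\varphi$ with an isometry to arrange $h\perp_{L^2}\mathcal K$ are both unnecessary: the paper's final estimate is $\le -\bigl(\frac{1}{4n}+\frac{\varepsilon}{8}\bigr)\|h\|_{W^{1,2}}^2+O(\|h\|_{C^0}\|h\|_{W^{1,2}}^2)$, i.e.\ \emph{strictly} negative definite with no kernel to mod out once $\|h\|$ is small (genuine Killing fields satisfy $\mathcal L_X\bar g=0$ and contribute nothing to $h$, and the would-be conformal degeneracy is absorbed by the volume constraint). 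Invoking $\mathcal K$ here suggests a degeneracy that the argument does not actually have.
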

\begin{proof}
Proposition 4 in \cite{brendle2011} exhibits a pointwise estimate for  $R_g$:
    \begin{align*}
       |&R_g-R_{\Bar{g}}+\lb \Ric_{\Bar{g}},h\rb-\lb\Ric_{\Bar{g}},h^2\rb+\frac{1}{4}\|\nabla h\|^2-\frac{1}{2}\nabla_i h_{kp}\cdot\nabla_k h_{ip}
       \\&+\frac{1}{4}\|\nabla tr(h)\|^2+\nabla_i(g^{ik}g^{jl}(\nabla_k h_{jl}-\nabla_l h_{jk}))
    |
    \\ &\le C\|h\|\cdot\|\nabla h\|^2+C\|h\|^3,
    \end{align*}
where $\|\cdot\|$ is the pointwise norm under $\bar{g}$, $\nabla$ is the Levi-Civita connection of $\Bar{g}$, $tr(h)$ is the trace of $h$ under metric $\Bar{g}$.
\begin{align*}
    &|\int R_g-R_{\bar{g}}+(n-1)tr(h)-(n-1)\|h\|^2+\frac{1}{4}\|\nabla h\|^2
    \\&+\frac{1}{2} h_{kp}\cdot \nabla_i\nabla_k h_{ip}
    +\frac{1}{4}\|\nabla tr(h)\|^2dV_{\bar{g}}|
    \\ \le& C\int \|h\|\cdot \|\nabla h\|^2+\|h\|^3dV_{\bar{g}}.
\end{align*}

Since $\bar{R}_{ijkl}=\bar{g}_{il}\bar{g}_{jk}-\bar{g}_{ik}\bar{g}_{jl}$, we have:
\begin{align*}
     \nabla_i\nabla_k h_{ip}=&\nabla_k\nabla_i h_{ip}-\bar{R}_{ikim}h_{mp}-\bar{R}_{ikpm}h_{im}
     \\=&\nabla_k\nabla_i h_{ip}-tr(h)\Bar{g}_{kp}+nh_{kp}.
 \end{align*}
 
According to Proposition 3.1, we can assume $\nabla \cdot h=0$, up to a diffeomorphism $\varphi$.
Therefore, 
\begin{align*}
    \int R_g-R_{\bar{g}}dV_{\bar{g}}=&\int -(n-1)tr(h)+(n-1)\|h\|^2-\frac{1}{4}\|\nabla h\|^2
    \\&-\frac{1}{2}h_{kp}\cdot\nabla_i \nabla_k h_{ip}
    -\frac{1}{4}\|\nabla tr(h)\|^2 dV_{\bar{g}}
    \\&+O(\|h\|_{C^0(S^n,\bar{g})}\|h\|^2_{W^{1,2}(S^n,\bar{g})})
    \\=& \int-(n-1)tr(h)+(\frac{n}{2}-1)\|h\|^2-\frac{1}{4}\|\nabla h\|^2
    -\frac{1}{4}\|\nabla tr(h)\|^2
    \\&+\frac{1}{2}tr(h)^2dV_{\bar{g}}+O(\|h\|_{C^0(S^n,\bar{g})}\|h\|^2_{W^{1,2}(S^n,\bar{g})}).
\end{align*}
Since 
\[V_g-V_{\bar{g}}=\int\frac{1}{2}tr(h)+\frac{1}{8}tr(h)^2-\frac{1}{4}\|h\|^2d{V_{\bar{g}}}+O(\|h\|_{C^0(S^n,\bar{g})}\|h\|^2_{L^2(S^n,\bar{g})}).\]
Assume $\delta=\int tr(h)dV_{\bar{g}}/V_{\bar{g}}$, $k=\frac{8(n-1)-4\delta}{4+\delta}$ satisfies \[\frac{k}{2}-(n-1)=-\delta(\frac{1}{2}+\frac{k}{8}),\]  
then:
\begin{align*}
    &\int  R_g-R_{\bar{g}}dV_{\bar{g}}+k(V_g-V_{\bar{g}})
    \\=& \int (\frac{k}{2}-n+1)tr(h)+(\frac{k}{8}+\frac{1}{2})tr(h)^2+(\frac{n}{2}-1-\frac{k}{4})\|h\|^2
    \\&-\frac{1}{4}\|\nabla tr(h)\|^2-\frac{1}{4}\|\nabla h\|^2d{V_{\bar{g}}}+O(\|h\|_{C^1(S^n,\bar{g})}\|h\|^2_{W^{1,2}(S^n,\bar{g})})
    \\=& \int(\frac{k}{8}+\frac{1}{2})(tr(h)-\delta)^2+(\frac{n}{2}-1-\frac{k}{4})\|h\|^2
    \\&-\frac{1}{4}\|\nabla tr(h)\|^2-\frac{1}{4}\|\nabla h\|^2d{V_{\bar{g}}}+O(\|h\|_{C^0(S^n,\bar{g})}\|h\|^2_{W^{1,2}(S^n,\bar{g})}).
\end{align*}

$\|\nabla h\|^2\ge \frac{1}{n}\|\nabla tr(h)\|$. $\|h\|^2\ge \frac{1}{n}tr(h)^2$, then \[\int \|h\|^2d{V_{\bar{g}}}\ge \frac{1}{n}\int (tr(h)^2-\delta^2)dV_{\bar{g}}= \frac{1}{n}\int (tr(h)-\delta)^2dV_{\bar{g}}.\]

Since $\int [tr(h)-\delta] dV_{\bar{g}}=0$, by Poincare inequality, we have $\|\nabla tr(h)\|^2_{L^2}\ge n \|tr(h)-\delta\|^2_{L^2}$.

Let $k=2(n-1)-\varepsilon$, we have $|\varepsilon|\le (n+1)\delta$.

Therefore, we can show:
\begin{align*}
    &\int  R_g-R_{\bar{g}}dV_{\bar{g}}+k(V_g-V_{\bar{g}})
    \\=&\int(\frac{n+1}{4}-\frac{\varepsilon}{8})(tr(h)-\delta)^2-(\frac{1}{2}-\frac{\varepsilon}{4})\|h\|^2
    \\&-\frac{1}{4}\|\nabla tr(h)\|^2-\frac{1}{4}\|\nabla h\|^2d{V_{\bar{g}}}+O(\|h\|_{C^0(S^n,\bar{g})}\|h\|^2_{W^{1,2}(S^n,\bar{g})})
    \\=& \int (\frac{n}{4} (tr(h)-\delta)^2-\frac{1}{4}\|\nabla tr(h)\|^2)+ (\frac{1}{4}-\frac{\varepsilon}{8}-\frac{1}{4n})((tr(h)-\delta)^2-\|\nabla h\|^2)
    \\&+(\frac{1}{4n}(tr(h)-\delta)^2-\frac{1}{4}\|h\|^2)
    -(\frac{1}{4}-\frac{\varepsilon}{4})\|h\|^2-(\frac{1}{4n}+\frac{\varepsilon}{8})\|\nabla h\|^2dV_{\bar{g}}
    \\&+O(\|h\|_{C^0(S^n,\bar{g})}\|h\|^2_{W^{1,2}(S^n,\bar{g})})
    \\ \le & -(\frac{1}{4n}+\frac{\varepsilon}{8})\|h\|^2_{W^{1,2}(S^n,\bar{g})}+O(\|h\|_{C^0(S^n,\bar{g})}\|h\|^2_{W^{1,2}(S^n,\bar{g})})\le0
\end{align*}

So we have $h=0$, $g=\bar{g}$.
\end{proof}
Then combining Theorem 1.2 in \cite{anderson1990convergence}, we have Theorem 1.3.
 
\bibliographystyle{plain}

\begin{thebibliography}{10}

\bibitem{anderson1990convergence}
Michael~T Anderson.
\newblock Convergence and rigidity of manifolds under ricci curvature bounds.
\newblock {\em Inventiones mathematicae}, 102(1):429--445, 1990.

\bibitem{bishop}
Richard Bishop.
\newblock A relation between volume, mean curvature and diameter.
\newblock {\em Notices Amer. Math. Soc}, 10(364):t963, 1963.

\bibitem{bray2019proof}
Hubert Bray, Feng Gui, Zhenhua Liu, and Yiyue Zhang.
\newblock Proof of bishop’s volume comparison theorem using singular soap
  bubbles.
\newblock {\em arXiv preprint arXiv:1903.12317}, 2019.

\bibitem{bray2009penrose}
Hubert~L Bray.
\newblock The penrose inequality in general relativity and volume comparison
  theorems involving scalar curvature (thesis).
\newblock {\em arXiv preprint arXiv:0902.3241}, 2009.

\bibitem{brendle2011}
Simon Brendle and Fernando~C. Marcques.
\newblock Scalar curvature rigidity of geodesic balls in $s^n$.
\newblock {\em J. Differential Geom.}, 88(3):379--394, 07 2011.

\bibitem{cheeger1997structure}
Jeff Cheeger, Tobias~H Colding, et~al.
\newblock On the structure of spaces with ricci curvature bounded below. i.
\newblock {\em Journal of Differential Geometry}, 46(3):406--480, 1997.

\bibitem{jeffv}
Matthew Gurskya and Jeff Viaclovskyb.
\newblock Volume comparison and the $\sigma_k$-yamabe problem.
\newblock {\em Advances in Mathematics 187}, 2004.

\bibitem{lott2009ricci}
John Lott and C{\'e}dric Villani.
\newblock Ricci curvature for metric-measure spaces via optimal transport.
\newblock {\em Annals of Mathematics}, pages 903--991, 2009.

\bibitem{o1983semi}
Barrett O'neill.
\newblock {\em Semi-Riemannian geometry with applications to relativity},
  volume 103.
\newblock Academic press, 1983.

\bibitem{petersen2016}
Peter Petersen.
\newblock {\em Riemannian Geometry}, volume 171.
\newblock Springer, 2016.

\bibitem{sturm1}
Karl-Theodor Sturm et~al.
\newblock On the geometry of metric measure spaces.
\newblock {\em Acta mathematica}, 196(1):65--131, 2006.

\bibitem{sturm2}
Karl-Theodor Sturm et~al.
\newblock On the geometry of metric measure spaces. ii.
\newblock {\em Acta Mathematica}, 196(1):133--177, 2006.

\bibitem{yuan2016volume}
Wei Yuan.
\newblock Volume comparison with respect to scalar curvature.
\newblock {\em arXiv preprint arXiv:1609.08849}, 2016.

\end{thebibliography}

\end{document}